\theoremstyle{plain}
\newtheorem{lem}{Lemma}[section]
\newtheorem{thm}[lem]{Theorem}
\theoremstyle{definition}
\begin{document}

\baselineskip 14truept
\title{On removal of perfect matching from folded hypercubes.}

\author{S. A. Mane }

\dedicatory{Center for Advanced Studies in Mathematics,
	Department of Mathematics,\\ Savitribai Phule Pune University, Pune-411007, India.\\
manesmruti@yahoo.com  \\}

  \maketitle

 \begin{abstract} The hypercube $Q_n$ of dimension $n$ is one of the most versatile and powerful interconnection networks. The $n-$dimensional folded cube denoted as $FQ_n$,
 	a variation of the hypercube possesses some embeddable properties that the hypercube does not possess. Dong and Wang\cite{dw}(In Theor. Comput. Sci.$ 771(2019)93-98)$ conjectured that "A subset $E^m$ of $2^n - 1$ edges of $FQ_n$ is a perfect matching if and only if $FQ_n - E^m$ is isomorphic to $Q_n$". In this paper, we disprove this conjecture by providing some perfect matchings removal of which from $FQ_n$ do not give a graph isomorphic to $Q_n$.

 \end{abstract}

%\maketitle

\noindent {\bf Keywords :}  Perfect matching,
Removal,Folded cubes, Hypercubes.

\section{Introduction}  For undefined terminology and
notations, we refer to the reader to West\cite{we}. Given any two graphs $G$ and $H$,
an injection $f : V(G)  V(H)$ is an embedding of $G$ into $H$ if $f (u)$ and $f (v)$ are adjacent in $H$ whenever $u$ and $v$ are adjacent
in $G$. If $G$ embeds in $H$, then $G$ is isomorphic to a subgraph of $H$. If $f$ is a bijection, then $G$ is isomorphic to a spanning subgraph
of $H$, moreover, if $G$ and $H$ are same then $f$ is called as an automorphism. 
Implementation of a parallel algorithm on a parallel computer can be modeled as an embedding problem. Therefore,
spanning subgraphs of interconnection networks have been a subject of both theoretical and applied research.\\
 Hypercubes are widely studied as they meet
 several conflicting demands that arise in the design of
 interconnection networks. The machine based on hypercubes such
 as the Cosmic Cube from Caltech, the iPSC/2 from Intel and Connection Machines have been implemented
 commercially. Several variations of hypercubes
 have been proposed and investigated to improve the efficiency
 of hypercube networks. The $n-$dimensional folded cube denoted as $FQ_n$,
a variation of the hypercube $Q_n$. Folded hypercubes (folded cube) is a standard hypercube with some extra links established between the nodes \cite{c}. \\
A set of edges $M$ of a graph $G$ is matching if every vertex
of $G$ is incident with at most one edge of $M$. If a vertex $v$ of $G$ is incident
with an edge of $M$, we say that $v$ is covered by $M$. A matching $M$ is perfect ($1-$ factor) if
every vertex of $G$ is covered by $M$. Thus, perfect matching is a spanning subgraph (Papers \cite{gh} and \cite{o} focuses more on this topic especially in hypercubes).\\ 
Hence, studying embedding of $Q_n$ after the removal of perfect matching from $FQ_n$ is a very interesting topic to study. Dong and Wang\cite{dw} raised conjecture which states "A subset $E^m$ of $2^n - 1$ edges of $FQ_n$ is a perfect matching if and only if $FQ_n - E^m$ is isomorphic to $Q_n$". In this paper, we disprove this conjecture by constructing some perfect matchings in $FQ_n$ removing those from $FQ_n$ do not give a graph isomorphic to $Q_n$.

 \section {Preliminaries}
 
 An $n-$dimensional hypercube $Q_n$ can be represented as an undirected graph $Q_n = (V, E)$ such that $V$ consists of $2^n$
  nodes(vertices) which are labeled as binary numbers of length $n$. $E$ is the set of edges that connect two nodes if and only if 
  they  differ in exactly one bit of their labels. 
The \textit{parity} of a vertex in $Q_n$ is the parity of the number
of $1$s in its name, even or odd. If the number of even parity
vertices is the same as the number of odd parity vertices then the graph is said to be \textit{balanced}. The hypercube $Q_n$ is balanced.
 It has many attractive properties, such as being bipartite, $n-$regular, $n-$connected.
 The diameter of a graph $G$ (diam $G$) is the maximal eccentricity in G.
 The eccentricity of a vertex is its greatest distance to any other vertex. diam $(Q_n) = n.$
 It is a vertex-transitive(symmetric) graph in the sense that, for given any two vertices $v_1$ and $v_2$ of $Q_n$, there is some automorphism.
 
 One copy of $Q_{n+1}$ can be decomposed into two copies of $Q_{n}$ (denoted by $Q^0_n$ and $Q^1_n$) whose vertices are joined by $2^{n}$ edges of a perfect matching $R$. These edges are called parallel edges. Let $V(Q_n) =\{v_i: 1 \leq i \leq 2^n \}$ and
 without loss $V(Q_{n+1}) = \{(v_i, 0), (v_i, 1): v_i \in V(Q_n), 1
 \leq i \leq 2^n\}$. So, we write $Q_{n+1} =
Q^0_n\cup Q^1_n\cup R$ where $ V(Q^0_n)  =\{(v_i, 0): v_i \in V(Q_n), 1
 \leq i \leq 2^n\}$, $ V(Q^1_n)  =\{(v_i, 1): v_i \in
 V(Q_n), 1 \leq i \leq 2^n\}$. Note
 that the end vertices of
 any edge in $R$ are called corresponding vertices. 
  
  For  $k = n + m$. We decompose $Q_k = Q_n
  \Box Q_{m}$. Now for any $t \in V(Q_{m})$, we denote by
  $Q^t_{n}$ the subgraph of $Q_k$ induced by the vertices whose
  last ${m}$ components form the tuple $t$. In general, if $A \subseteq V(Q_n)$ and $B \subseteq V(Q_m)$ then we let $(A, B)$ be the subgraph of $Q_{n+m}$ induced by the vertex set $\{ (x, y) :  x \in A$ and $y\in B $. Further, if $B = \{t \}$ then we write $(A, \{t \}) = (A, t) $.  It is easy to observe that $Q^t_{n}$ is isomorphic to $Q_n$.
  
   The folded $n-$cube of dimension $n$ has $2^n$ vertices, each labeled by an
 $n-$bit binary string $(a_1,a_2, \ldots ,a_n)$. For $n \geq 2$, $FQ_n$ is obtained by taking two copies of the hypercube $Q^0_{(n-1)}$ and $Q^1_{(n-1)}$
  and adding $2 \times 2^{(n-1)}$ edges between the two as follows:\\
  A vertex $A = (a_1,a_2, \ldots ,a_{(n-1)}, 0)$ of $Q^0_{(n-1)}$ is joined to vertex $B = (b_1,b_2, \ldots ,b_{(n-1)}, 1)$ of $Q^1_{(n-1)}$ iff for every $i, 2  \leq i \leq  n,$ either \\     
   $(i)$ $a_i = b_i$; in this case, $AB$ is called a hypercube
  edge ($B$ is sometimes denoted as $A^h$), or\\
  $(ii)$  $a_i = \overline{b_i}$; in this case, $AB$ is called a complement
  edge($B$ is sometimes denoted as $A^c$).\\
 Thus, a folded $n-$cube, $FQ_n$ is obtained from $Q_n$ by adding additional link between two nodes whose addresses are complementary to each other.
   If there is a path between vertices say $u,v$ and $w$ then we denote it as $P= u-v-w $. Some perfect matchings in $FQ_n$ removal of those from $FQ_n$ do not give a graph isomorphic to $Q_n$, we call these matchings as non-removable matchings. Perfect matchings removal of which gives graph isomorphic to $Q_n$,  we call them removable matching.  
   
   \section {\bf Construction of removable and non-removable perfect matchings.}

 In this section, we construct removable and non-removable perfect matchings.  \\
  We decompose $Q^0_{(n-1)}$ and $Q^1_{(n-1)}$ each in ${(n-1)}^{th}$ direction which gives $Q^0_{(n-1)} = Q^{00}_{(n-2)} \cup Q^{10}_{(n-2)} \cup M_{00} $ and $Q^1_{(n-1)} = Q^{01}_{(n-2)} \cup Q^{11}_{(n-2)} \cup M_{11} $, where $M_{00}$ and $M_{11}$ both are set of sub hypercube edges(say) and $|M_{00} | = |M_{11}| = 2^{(n-2)}$. We denote by $M_0 = M_{00} \cup M_{11}$       
We write $FQ_n = Q^0_{(n-1)} \cup Q^1_{(n-1)} \cup M_1 \cup M_2 $, where $M_1$ and $M_2$ denote set of hypercube edges and complement(augmented) edges respectively. Clearly, $|M_1 | = |M_2| = 2^{(n-1)}$ and both are perfect matchings. Also, we can write $FQ_n = Q^{00}_{(n-2)} \cup Q^{10}_{(n-2)} \cup M_0 \cup Q^{01}_{(n-2)} \cup Q^{11}_{(n-2)} \cup M_1 \cup M_2 $ (see FIGURE.$1$), it can be easily observe that $M_2$ contain edge set which joins end vertices of $Q^{00}_{(n-2)}$ to $Q^{11}_{(n-2)}$ and end vertices of $Q^{01}_{(n-2)}$ to $Q^{10}_{(n-2)}$.\\
\begin{figure}[h]
	\begin{tikzpicture} [scale=.5]    
	
	%	\draw[fill=black](0,0) circle(.07);
	\node [below] at (0,0) {$Q^{10}_{n-2}$}; 
	\node [below] at (0,10) {$Q^{00}_{n-2}$};  
	\node [below] at (10,10) {$Q^{01}_{n-2}$};  
	\node [below] at (10,0) {$Q^{11}_{n-2}$}; 
	
	\node [below] at (0,-2.6) {$Q^0_{n-1}$};  
	\node [below] at (10,-2.6) {$Q^1_{n-1}$};
	
	\node [above] at (5,10.5) {Part of $M_1$};
	\node [below] at (5,-1.5) {Part of $M_1$};
	
	\node [below, rotate=50] at (5.8,7) {Part of $M_2$};
	\node [below, rotate=-50] at (4,7.2) {Part of $M_2$};
	
	\node [left] at (-0.3,5) {$M_{00}$};  
	\node [right] at (10.1,5) {$M_{11}$};
	
	\draw (0.82,-1)--(9.03,-1);  
	\draw (0.3,-1.4)--(9.6,-1.4); 
	
	\draw (0.82,10.2)--(9.02,10.2);  
	\draw (0.3,10.6)--(9.6,10.6);
	
	\draw (-0.4,8.63)--(-0.4,0.57);  
	\draw (0.3,8.61)--(0.3,0.59);
	
	\draw (9.6,8.58)--(9.6,0.6);  
	\draw (10.2,8.58)--(10.2,0.6);
	
	\draw (0.78,0.25)--(8.85,9.4);
	\draw (1.01,-0.5)--(9.2,8.8);  
	
	\draw (9.17,0.33)--(1,9.5);
	\draw (8.86,-0.4)--(0.7,8.83);

	\draw[thick, rotate=90] (9.6,0.03) ellipse (30pt and 30pt);
	\draw[thick, rotate=90] (9.6,-9.9) ellipse (30pt and 30pt);
	\draw[thick, rotate=90] (-0.4,0.03) ellipse (30pt and 30pt);
	\draw[thick, rotate=90] (-0.4,-9.9) ellipse (30pt and 30pt);
	
	\draw[thick, rotate=90] (4.5,0) ellipse (200pt and 80pt);
	\draw[thick, rotate=90] (4.5,-9.8) ellipse (200pt and 80pt);
	%\draw (13.5,13.5)--(15,15);
	\end{tikzpicture}
	\caption{Folded cube $FQ_n$}%\label{f3}
\end{figure}
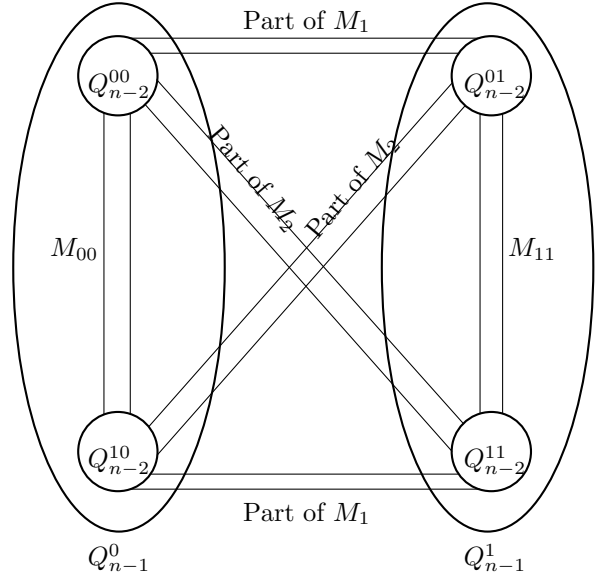

\begin{thm} Let $n \geq 2$. In $FQ_n$, a perfect matching $M$ is removable if $M = M_i (0 \leq i \leq 2)$.  
\end{thm}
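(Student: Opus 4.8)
The plan is to exhibit in each case an explicit isomorphism from $FQ_n - M_i$ onto the hypercube $Q_n$, assembled from bitwise-complement automorphisms of lower-dimensional cubes. I write each vertex as a binary string and use throughout that $FQ_n = Q_n \cup M_2$, where $M_2$ is the matching of complement edges $x \leftrightarrow \bar{x}$, while $Q_n = Q^0_{n-1} \cup Q^1_{n-1} \cup M_1$ is the usual decomposition with $M_1$ the hypercube matching joining corresponding vertices. The case $M = M_2$ is then immediate: since $M_2$ is edge-disjoint from $Q_n$, deleting it gives $FQ_n - M_2 = Q^0_{n-1} \cup Q^1_{n-1} \cup M_1 = Q_n$ verbatim.

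For $M = M_1$, deleting $M_1$ leaves $Q^0_{n-1} \cup Q^1_{n-1} \cup M_2$, that is, two copies of $Q_{n-1}$ joined by the complement matching $(a,0) \leftrightarrow (\bar{a},1)$ on the first $n-1$ coordinates. I would apply the relabelling $\Phi$ that is the identity on $Q^0_{n-1}$ and complements the first $n-1$ coordinates on $Q^1_{n-1}$. Since complementation is an automorphism of $Q_{n-1}$, $\Phi$ fixes the edges inside each copy and sends every complement edge $(a,0)\leftrightarrow(\bar{a},1)$ to the corresponding-vertex edge $(a,0)\leftrightarrow(a,1)$. By the decomposition of $Q_n$ into two copies of $Q_{n-1}$ joined by a matching of corresponding vertices (recalled in the preliminaries), the image is exactly $Q_n$, so $FQ_n - M_1 \cong Q_n$.

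The case $M = M_0$ carries the real content. Using the finer decomposition $FQ_n = Q^{00}_{n-2} \cup Q^{10}_{n-2} \cup Q^{01}_{n-2} \cup Q^{11}_{n-2} \cup M_0 \cup M_1 \cup M_2$ of Figure $1$, deleting $M_0$ leaves four copies of $Q_{n-2}$ joined only by $M_1$ and $M_2$. Tracing the edges, $M_1$ joins $Q^{00}_{n-2}$ to $Q^{01}_{n-2}$ and $Q^{10}_{n-2}$ to $Q^{11}_{n-2}$ by the identity on the $Q_{n-2}$-coordinates, while $M_2$ joins $Q^{00}_{n-2}$ to $Q^{11}_{n-2}$ and $Q^{10}_{n-2}$ to $Q^{01}_{n-2}$ by complementation of those coordinates (exactly the observation recorded just after Figure $1$). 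Hence the four copies occupy the corners of a $4$-cycle, in the cyclic order $Q^{00}_{n-2}, Q^{01}_{n-2}, Q^{10}_{n-2}, Q^{11}_{n-2}$, and the target is to identify this configuration with $Q_{n-2} \Box Q_2 = Q_n$, where $Q_2$ is the $4$-cycle.

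The crux, and the step I expect to be the main obstacle, is to relabel the four copies so that all four matchings simultaneously become corresponding-vertex matchings. Encoding each matching as a permutation of $\{0,1\}^{n-2}$ (the $M_1$ edges as the identity, the $M_2$ edges as bitwise complement $c$), I would compose these permutations once around the $4$-cycle: the two complement steps cancel, so this monodromy is the identity. This vanishing is precisely what permits a consistent choice of automorphisms $\phi_{00}, \phi_{01}, \phi_{10}, \phi_{11}$ of $Q_{n-2}$ — concretely $\phi_{00}=\phi_{01}=\mathrm{id}$ and $\phi_{10}=\phi_{11}=c$ — such that after relabelling copy $X$ by $\phi_X$ every connecting edge joins equally-labelled vertices. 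As each $\phi_X$ is an automorphism of $Q_{n-2}$, the within-copy structure is unchanged, and the relabelled graph is exactly $Q_{n-2} \Box Q_2 = Q_n$. This settles the last case and the proposition.
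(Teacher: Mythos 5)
Your proposal is correct and follows essentially the same route as the paper: cases $M_2$ and $M_1$ use exactly the paper's maps (identity, and prefix-complementation on $Q^1_{n-1}$), and for $M_0$ your relabelling $\phi_{10}=\phi_{11}=c$, once your $4$-cycle of copies $Q^{00},Q^{01},Q^{10},Q^{11}$ is identified with the standard $Q_2$ (which swaps the suffix labels $10$ and $11$), composes to precisely the paper's explicit isomorphism $f(x_1,\ldots,x_{n-2},1,j)=(\overline{x_1,\ldots,x_{n-2}},1,\overline{j})$. Your trivial-monodromy argument is just a more conceptual derivation of that same map; nothing differs in substance.
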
 
\begin{proof} We want to prove that there exists some $f$ from  $V(Q_n)$  to $V(FQ_n - M) (M = M_i (0 \leq i \leq 2))$ with the property that $f$ is bijection and $f(u)$ and $f(v)$ are adjacent in $FQ_n - M$ whenever $u$ and $v$ are adjacent $Q_n$. Since both the graphs are $n-$regular
	and have the same vertex set implies $f$ is bijective. It is enough to show that
	$f(X)f(Y) \in E(FQ_n - M)$ whenever $XY \in E(Q_n)$.\\
Case $1: $ $M = M_2$.\\	
	In this case $f : V(FQ_n - M_2) \rightarrow V(Q_n)$, we define as $f(x_1,x_2,....x_n) = (x_1,x_2,....x_n) $   
	 one can easily observe that it is an isomorphism(see FIGURE.$2$).\\
	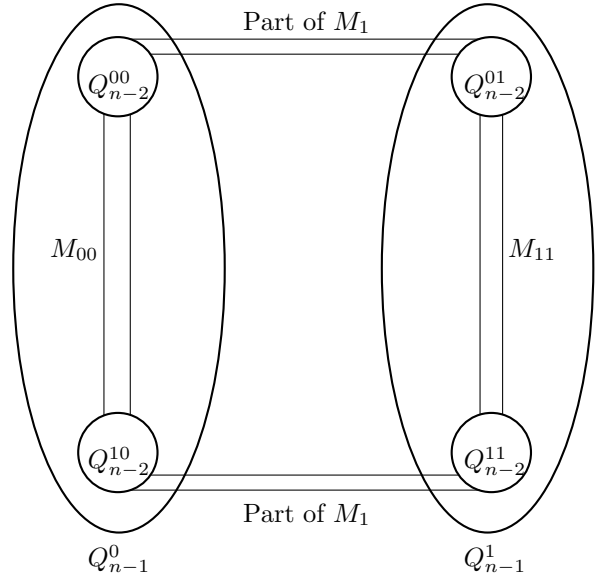
\begin{figure}[h]
		\begin{tikzpicture} [scale=.5]    
		
		%	\draw[fill=black](0,0) circle(.07);
		\node [below] at (0,0) {$Q^{10}_{n-2}$}; 
		\node [below] at (0,10) {$Q^{00}_{n-2}$};  
		\node [below] at (10,10) {$Q^{01}_{n-2}$};  
		\node [below] at (10,0) {$Q^{11}_{n-2}$}; 
		
		\node [below] at (0,-2.6) {$Q^0_{n-1}$};  
		\node [below] at (10,-2.6) {$Q^1_{n-1}$};
		
		\node [above] at (5,10.5) {Part of $M_1$};
		\node [below] at (5,-1.5) {Part of $M_1$};
		
		%\node [below, rotate=50] at (5.8,7) {Part of $M_2$};
		%\node [below, rotate=-50] at (4,7.2) {Part of $M_2$};
		
		\node [left] at (-0.3,5) {$M_{00}$};  
		\node [right] at (10.1,5) {$M_{11}$};
		
		\draw (0.82,-1)--(9.03,-1);  
		\draw (0.3,-1.4)--(9.6,-1.4); 
		
		\draw (0.82,10.2)--(9.02,10.2);  
		\draw (0.3,10.6)--(9.6,10.6);
		
		\draw (-0.4,8.63)--(-0.4,0.57);  
		\draw (0.3,8.61)--(0.3,0.59);
		
		\draw (9.6,8.58)--(9.6,0.6);  
		\draw (10.2,8.58)--(10.2,0.6);
		
		%\draw (0.78,0.25)--(8.85,9.4);
		%\draw (1.01,-0.5)--(9.2,8.8);  
		
		%\draw (9.17,0.33)--(1,9.5);
		%\draw (8.86,-0.4)--(0.7,8.83);

		\draw[thick, rotate=90] (9.6,0.03) ellipse (30pt and 30pt);
		\draw[thick, rotate=90] (9.6,-9.9) ellipse (30pt and 30pt);
		\draw[thick, rotate=90] (-0.4,0.03) ellipse (30pt and 30pt);
		\draw[thick, rotate=90] (-0.4,-9.9) ellipse (30pt and 30pt);
		
		\draw[thick, rotate=90] (4.5,0) ellipse (200pt and 80pt);
		\draw[thick, rotate=90] (4.5,-9.8) ellipse (200pt and 80pt);
		%\draw (13.5,13.5)--(15,15);
		\end{tikzpicture}
		\caption{$FQ_n-M_2$ isomorphic to $Q_n$}%\label{f3}
	\end{figure}
	 
	 Case $2: $ $M = M_1$.\\	
		Now consider $G_1 = FQ_n - M_1$. Define $f : Q_n \rightarrow  G_1$ as $f(x_1,x_2,....x_{n-1}, 0) = (x_1,x_2,....x_{n-1}, 0) $ and $f(x_1,x_2,....x_{n-1}, 1) = \overline{(x_1,x_2,....x_{n-1}}, 1)$.
	Suppose that $XY \in E(Q_n)$.\\
	$a: $ If $X,Y$ are vertices of $Q^0_{(n-1)}$ then $f(X)f(Y) \in E(Q^0_{(n-1)})  \subset E(G_1)$.\\
	$b:$ In case when $X,Y$ are vertices of $Q^1_{(n-1)}$ and $XY \in E(Q_n)$ means they differ in only one position say $i^{th}$ position $(1 \leq i \leq {n-1})$. But then $\overline{(x_1,x_2,....x_{n-1}}, 1)$ and $\overline{(y_1,y_2,....y_{n-1}}, 1)$ also differ in $i^{th}$ position and hence $f(X)f(Y) \in E(Q^1_{(n-1)})  \subset E(G_1)$.\\
	$c: $ Suppose $X$ is vertex of $Q^0_{(n-1)}$ and $Y$ is vertex of $Q^1_{(n-1)}$, then $XY \in E(Q_n)$ only if $x_i = y_i(1 \leq i \leq {n-1})$. Now $x_i = y_i(1 \leq i \leq {n-1})$ gives $f(x_1,x_2,....x_{n-1}, 0) = (x_1,x_2,....x_{n-1}, 0) = X $ and
	$f(y_1,y_2,....y_{n-1}, 1) = \overline{(x_1,x_2,....x_{n-1}}, 1) = \overline X $. Hence, $f(X)f(Y) \in M_2 \subset  E(G_1)$. (see FIGURE.$3$)\\
	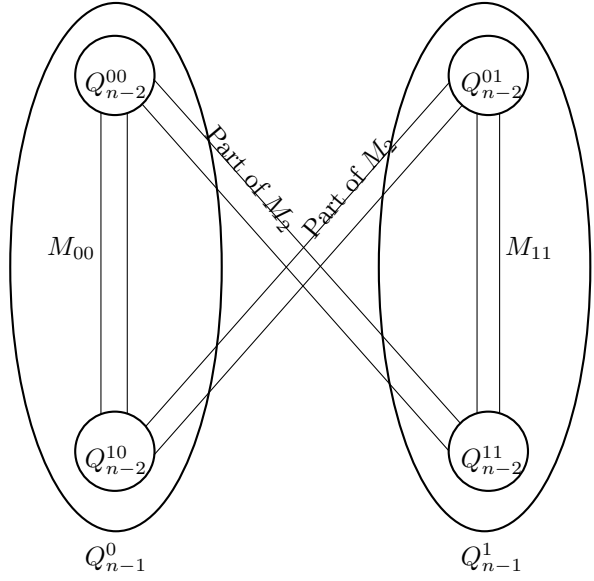
\begin{figure}[h]
		\begin{tikzpicture} [scale=.5]    
		
		%	\draw[fill=black](0,0) circle(.07);
		\node [below] at (0,0) {$Q^{10}_{n-2}$}; 
		\node [below] at (0,10) {$Q^{00}_{n-2}$};  
		\node [below] at (10,10) {$Q^{01}_{n-2}$};  
		\node [below] at (10,0) {$Q^{11}_{n-2}$}; 
		
		\node [below] at (0,-2.6) {$Q^0_{n-1}$};  
		\node [below] at (10,-2.6) {$Q^1_{n-1}$};
		
		%\node [above] at (5,10.5) {Part of $M_1$};
		%\node [below] at (5,-1.5) {Part of $M_1$};
		
		\node [below, rotate=50] at (5.8,7) {Part of $M_2$};
		\node [below, rotate=-50] at (4,7.2) {Part of $M_2$};
		
		\node [left] at (-0.3,5) {$M_{00}$};  
		\node [right] at (10.1,5) {$M_{11}$};
		
		%\draw (0.82,-1)--(9.03,-1);  
		%\draw (0.3,-1.4)--(9.6,-1.4); 
		
		%\draw (0.82,10.2)--(9.02,10.2);  
		%\draw (0.3,10.6)--(9.6,10.6);
		
		\draw (-0.4,8.63)--(-0.4,0.57);  
		\draw (0.3,8.61)--(0.3,0.59);
		
		\draw (9.6,8.58)--(9.6,0.6);  
		\draw (10.2,8.58)--(10.2,0.6);
		
		\draw (0.78,0.25)--(8.85,9.4);
		\draw (1.01,-0.5)--(9.2,8.8);  
		
		\draw (9.17,0.33)--(1,9.5);
		\draw (8.86,-0.4)--(0.7,8.83);

		\draw[thick, rotate=90] (9.6,0.03) ellipse (30pt and 30pt);
		\draw[thick, rotate=90] (9.6,-9.9) ellipse (30pt and 30pt);
		\draw[thick, rotate=90] (-0.4,0.03) ellipse (30pt and 30pt);
		\draw[thick, rotate=90] (-0.4,-9.9) ellipse (30pt and 30pt);
		
		\draw[thick, rotate=90] (4.5,0) ellipse (200pt and 80pt);
		\draw[thick, rotate=90] (4.5,-9.8) ellipse (200pt and 80pt);
		%\draw (13.5,13.5)--(15,15);
		\end{tikzpicture}
		\caption{$FQ_n-M_1$ isomorphic to $Q_n$}%\label{f3}
	\end{figure}
	
	Case $3: $ $M = M_0$.\\		
		Let $G_2 = FQ_n - M_0$. Now define $f : Q_n \rightarrow  G_2$ as $f(x_1,x_2,....x_{n-2},1, j) = (\overline{x_1,x_2,....x_{n-2}},1,\overline j) $ and $f(x_1,x_2,....x_{n-2},0, j) = (x_1,x_2,....x_{n-2},0, j)$ ( $j \in \{0,1\}$ ).\\ For $j_1,j_2 \in \{0,1\}$.\\ 
		$a: $ If $X = (x_1,x_2,....x_{n-2},0, {j_1})$ and $Y = (y_1,y_2,....y_{n-2},0, {j_2}) $ then  $f(X)f(Y) \in E(Q^{00}_{(n-2)} \cup Q^{01}_{(n-2)} \cup M_1) \subset  E(G_1)$.\\ 
		$b: $ On other hand if $X = (x_1,x_2,....x_{n-2},1, {j_1})$  and $Y = (y_1,y_2,....y_{n-2},1, {j_2}) $  then $f(X)f(Y) \in E(Q^{10}_{(n-2)} \cup Q^{11}_{(n-2)} \cup M_1) \subset  E(G_1)$. \\ 
	 $c: $ Now, if $X = (x_1,x_2,....x_{n-2},0, {j_1})$ and $Y = (y_1,y_2,....y_{n-2},1, {j_2}) $ then $XY \in E(Q_n)$ only if $x_i = y_i(1 \leq i \leq {n-2})$ and ${j_1} = {j_2}$. But then $f(x) = (x_1,x_2,....x_{n-2},0, {j_1})$ and $f(Y) = (\overline{y_1,y_2,....y_{n-2}},1, \overline {j_2}) = (\overline{x_1,x_2,....x_{n-2}},1,\overline {j_1}) = \overline X $ gives $f(X)f(Y) \in M_2 \subset  E(G_1)$. (see FIGURE.$4$).\\
	 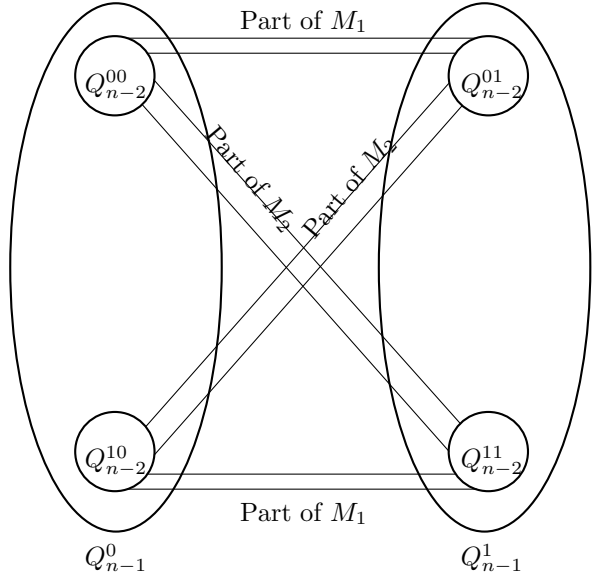
\begin{figure}[h]
	 	\begin{tikzpicture} [scale=.5]    
	 	
	 	%	\draw[fill=black](0,0) circle(.07);
	 	\node [below] at (0,0) {$Q^{10}_{n-2}$}; 
	 	\node [below] at (0,10) {$Q^{00}_{n-2}$};  
	 	\node [below] at (10,10) {$Q^{01}_{n-2}$};  
	 	\node [below] at (10,0) {$Q^{11}_{n-2}$}; 
	 	
	 	\node [below] at (0,-2.6) {$Q^0_{n-1}$};  
	 	\node [below] at (10,-2.6) {$Q^1_{n-1}$};
	 	
	 	\node [above] at (5,10.5) {Part of $M_1$};
	 	\node [below] at (5,-1.5) {Part of $M_1$};
	 	
	 	\node [below, rotate=50] at (5.8,7) {Part of $M_2$};
	 	\node [below, rotate=-50] at (4,7.2) {Part of $M_2$};
	 	
	 	%\node [left] at (-0.3,5) {$M_{00}$};  
	 	%\node [right] at (10.1,5) {$M_{11}$};
	 	
	 	\draw (0.82,-1)--(9.03,-1);  
	 	\draw (0.3,-1.4)--(9.6,-1.4); 
	 	
	 	\draw (0.82,10.2)--(9.02,10.2);  
	 	\draw (0.3,10.6)--(9.6,10.6);
	 	
	 	%\draw (-0.4,8.63)--(-0.4,0.57);  
	 	%\draw (0.3,8.61)--(0.3,0.59);
	 	
	 	%\draw (9.6,8.58)--(9.6,0.6);  
	 	%\draw (10.2,8.58)--(10.2,0.6);
	 	
	 	\draw (0.78,0.25)--(8.85,9.4);
	 	\draw (1.01,-0.5)--(9.2,8.8);  
	 	
	 	\draw (9.17,0.33)--(1,9.5);
	 	\draw (8.86,-0.4)--(0.7,8.83);

	 	\draw[thick, rotate=90] (9.6,0.03) ellipse (30pt and 30pt);
	 	\draw[thick, rotate=90] (9.6,-9.9) ellipse (30pt and 30pt);
	 	\draw[thick, rotate=90] (-0.4,0.03) ellipse (30pt and 30pt);
	 	\draw[thick, rotate=90] (-0.4,-9.9) ellipse (30pt and 30pt);
	 	
	 	\draw[thick, rotate=90] (4.5,0) ellipse (200pt and 80pt);
	 	\draw[thick, rotate=90] (4.5,-9.8) ellipse (200pt and 80pt);
	 	%\draw (13.5,13.5)--(15,15);
	 	\end{tikzpicture}
	 	\caption{$FQ_n-M_0$ isomorphic to $Q_n$}%\label{f3}
	 \end{figure}

\end{proof}

Suppose after removing a subset $E^m$ of $2^n - 1$ edges of $FQ_n$ we get $FQ_n - E^m$ isomorphic to $Q_n$ then naturally $E^m$ is a perfect matching. Because $Q_n$ which is $n-$regular graph is a spanning subgraph of $FQ_n$ which is $(n+1)-$regular. Hence, $E^m$ is spanning subgraph of $FQ_n$ because after removing it reduces the degree of each vertex of $FQ_n$ by one. Means $E^m$ is spanning subgraph of disjoint edges with a degree of every vertex one in $E^m$, hence is perfect matching.\\
 Non-removability of perfect matching we prove by contradiction. While proving we will make use of the property of hypercube $Q_n$ that it is vertex-transitive and its diameter is $n$, due to these properties there exists exactly one vertex $u_1$ in $Q_n$ such that $d(u,u_1) = n$ for given any vertex $u$ in $Q_n$. Now we prove our main result.
 
\begin{thm} Let $n \geq 4$. In $FQ_n$, a perfect matching $M$ is non-removable if $M \neq M_i (0 \leq i \leq 2)$ and $M \subseteq \bigcup^2_{i=0} M_i$.  
\end{thm}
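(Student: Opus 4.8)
\noindent The plan is to analyse the $3$-regular spanning subgraph $H:=M_0\cup M_1\cup M_2$ of $FQ_n$. First I would note that for $n\ge 3$ the three classes are pairwise edge-disjoint ($M_0$ flips bit $n-1$, $M_1$ flips bit $n$, $M_2$ flips all $n$ bits), so $H$ is $3$-regular, and that the orbit of any vertex under the three involutions ``flip bit $n-1$'', ``flip bit $n$'' and ``complement all bits'' has exactly $8$ elements and spans a copy of $Q_3$ (these three bit-patterns are independent over $\mathbb{F}_2$). Hence $H$ is a disjoint union of $2^{n-3}$ copies of $Q_3$, and a perfect matching $M\subseteq H$ is exactly a choice, component by component, of a perfect matching of that $Q_3$; the classes $M_0,M_1,M_2$ are the three single-direction matchings taken uniformly over all components. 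Thus the hypotheses $M\subseteq\bigcup_i M_i$ and $M\neq M_i$ become the reformulation I will use throughout: \emph{none of $M_0,M_1,M_2$ is contained in $M$}, equivalently in $FQ_n-M$ at least one edge of each of the three classes survives.

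\noindent Now suppose, for contradiction, that $FQ_n-M\cong Q_n$. Then $FQ_n-M$ is vertex-transitive of diameter $n$ and, as recalled before the statement, each of its vertices has \emph{exactly one} vertex at distance $n$; in particular every vertex has eccentricity $n$. I would contradict this by producing a single vertex of eccentricity at most $n-1$. Since $M\neq M_2$, fix a surviving complement edge $u\overline u$, so $d(u,\overline u)=1$, and take $u$ as source.

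\noindent The distance argument rests on the fact that, because $M\subseteq\bigcup_i M_i$, no edge in directions $1,\dots,n-2$ is ever deleted: those $n-2$ coordinates may be flipped freely at every vertex (such a move changes the $Q_3$-component but is always available). The only expensive moves are a net flip of bit $n-1$, of bit $n$, or of all bits; the last is free at $u$ through $u\overline u$. To realize a flip of bit $n-1$ or bit $n$ I would travel, using the free coordinates, to the nearest component that retains an edge of the corresponding class --- which exists by the reformulation above --- flip there, and return. As the components are indexed by the first $n-2$ coordinates modulo complementation, such a component lies within Hamming distance $\lfloor(n-2)/2\rfloor$ of $u$, so a round trip plus one flip costs at most $2\lfloor(n-2)/2\rfloor+1\le n-1$. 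Combining the complement shortcut with at most one such class-detour, and absorbing the free-coordinate corrections into the trip, every vertex $w$ is reached from $u$ in at most $n-1$ steps, so $\mathrm{ecc}(u)\le n-1<n$, contradicting $FQ_n-M\cong Q_n$.

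\noindent The \textbf{main obstacle} is making this last bound genuinely uniform: one must verify that for every mixed $M$ the detours can be organized so that \emph{no} pair of vertices is left at distance $n$, and in particular handle targets that require two expensive flips at once and the case in which a whole class survives in only one, possibly distant, component. This is exactly where the reformulation of the first paragraph is decisive, and where the hypothesis $n\ge4$ is used: it guarantees at least two $Q_3$-components ($2^{n-3}\ge2$), so that an edge of a needed class can be found in a component different from the current one. For $n=3$ there is a single component, $FQ_3\cong K_{4,4}$, and \emph{every} perfect matching is removable, so $n\ge4$ is necessary. For the write-up two cleaner obstructions may shorten the work. For $n$ even, a surviving complement edge joins two vertices of equal parity and, together with a surviving bit-flipping path between them (necessarily of length $\equiv n\pmod 2$), closes an odd walk, so $FQ_n-M$ is not bipartite while $Q_n$ is. Alternatively one may use the local invariant that in $Q_n$ every edge lies in exactly $n-1$ four-cycles: when $M$ uses no complement edge but mixes directions within a component, a surviving complement edge can be shown to lie in only $n-2$ four-cycles, already contradicting $FQ_n-M\cong Q_n$.
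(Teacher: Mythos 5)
Your overall strategy is the same as the paper's: assume $FQ_n-M\cong Q_n$, invoke vertex-transitivity and the fact that every vertex of $Q_n$ has exactly one vertex at distance $n$, and contradict this by exhibiting a vertex $u$ (an endpoint of a surviving complement edge) of eccentricity at most $n-1$. Your structural observation that $M_0\cup M_1\cup M_2$ splits into $2^{n-3}$ vertex-disjoint copies of $Q_3$ (the three flip-vectors $e_{n-1}$, $e_n$, $\mathbf 1$ being independent over $\mathbb F_2$ for $n\ge 3$), and the resulting reformulation that each class retains at least one surviving edge, are correct and cleaner than anything the paper states. However, the heart of the theorem is exactly the uniform distance bound that you defer to your ``main obstacle'' paragraph, and as written your detour estimate is not just unproven but wrong: components are indexed by the first $n-2$ coordinates \emph{modulo complementation}, but a surviving $M_0$-edge is pinned to one specific side of its component, since passing to the complementary side is itself an expensive move. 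Concretely, take $M$ to be the directional $M_0$-matching on every $Q_3$-component except the component of $u=(0,\dots,0)$, where $M$ consists of the two $M_0$-edges on side $0$ together with the two $M_1$-edges on side $\overline 0$; this $M$ satisfies your hypotheses, yet the only surviving $M_0$-edges sit at first-coordinates $\overline 0$, at free-distance $n-2$ from $u$, so your round trip for the target $u+e_{n-1}$ costs $2(n-2)+1>n-1$, and the alternative detour (complement at $u$, then a surviving $M_1$-edge at side $t$) costs $|s|+d(s,t)+(n-2-|t|)+2\ge n$ by the triangle inequality. The target is in fact reached in $3$ steps, but only by the route $u\to\overline u\to\overline u+e_{n-1}\to u+e_{n-1}$, which uses a complement edge at a vertex \emph{other than} $u$; its survival is not given by your reformulation (which guarantees only one surviving edge per class) and must be extracted from an analysis of which local perfect matchings of a $Q_3$-component are possible. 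That component-wise case analysis is precisely the missing work, and it is what the paper attempts instead with its explicit edges $e_1$ (surviving complement edge at $u$), $e_2$ (surviving hypercube edge at a neighbour $v$), and, in its second case, $e_3,e_4,e_5$, followed by case-by-case distance counts.

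Your two proposed shortcuts do not close this gap either. The bipartiteness obstruction for even $n$ needs a surviving path of hypercube edges from $x$ to $\overline x$, i.e.\ connectivity of $Q_n$ minus the matching $M\cap(M_0\cup M_1)$, which you do not establish, and it says nothing for odd $n$. The four-cycle invariant is also not automatic: in $FQ_n$ a complement edge $x\overline x$ lies on $n$ four-cycles, one per direction $i$, and after deleting $M\subseteq M_0\cup M_1$ the cycles with $i\le n-2$ all survive, while each endpoint kills exactly one of the two special cycles $i\in\{n-1,n\}$; if the $M$-edges at $x$ and at $\overline x$ lie in the \emph{same} direction, the surviving count is $n-1$, matching $Q_n$, so no contradiction arises at that edge and you would still have to prove that some suitable complement edge with mismatched endpoint directions exists --- again a case analysis over the local matchings. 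So the proposal identifies the right framework and a genuinely useful decomposition, but the decisive quantitative step is absent, and the specific bound you propose in its place fails on explicit matchings.
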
 
 
\begin{proof} Let $M$ ($M \neq M_i (0 \leq i \leq 2)$ and $M \subseteq \bigcup^2_{i=0} M_i$) be a perfect matching in $FQ_n (n \geq 4)$ such that $FQ_n - M = G$(say) is isomorphic to $Q_n$. As $Q_n$ is vertex-transitive and its diameter is $n$ so $G$ is also vertex-transitive and its diameter is $n$.\\
	 For $0 \leq i \leq 2,$ $M \neq M_i$ gives $G\cap M_i \neq \phi $. Thus $G$ contains at least one hypercube edge and one augmented edge. Due to vertex symmetry without loss let vertex $u = (0,0, \ldots ,0,0 ) $ and vertex $v = (1,0, \ldots ,0,0 )$ (both from $ V(Q^{00}_{(n-2)}) $), are such that $u$ has its augmented edge $uu^c = e_1$ (vertex $ u^c = (1,1, \ldots ,1 ) \in V(Q^{11}_{(n-2)}) $) in $G$ and $v$  has its hypercube edge $vv^h = e_2$ (vertex $v^h = (1,0,0, \ldots ,0,0,0,1 ) \in V(Q^{01}_{(n-2)}) $) in $G$. \\
	Case $1:$ $M \subseteq \bigcup^2_{i=1} M_i$.\\
	 Now in $G$, we calculate the distance of all vertices from $ u$.\\
	  $a: $ As $M \subseteq \bigcup^2_{i=1} M_i$, therefore $ M_0 \in E(G)$.  Every vertex in $Q^0_{(n-1)}$ is at the distance at most $n-1$ from $u$.\\
$b: $ Due to the presence of an edge $e_1$, every vertex in $Q^{11}_{(n-2)}$ is at the distance at most $1 + (n-2) = n-1$ from $u$.\\ 
$c: $ Now we calculate distance of vertices (except vertex $u^h = (0,0, \ldots ,0,0,1 )$) of $ Q^{01}_{(n-2)} $ from the vertex $u$. Vertices of $ Q^{11}_{(n-2)} $ except vertex $w = (0,0, \ldots ,0,1,1 )$ are within a distance $n-3$ from $u^c$, and all these vertices are joined to their corresponding vertices of $ Q^{01}_{(n-2)} $ through $M_{11}$. Thus, $u$ can reach to  these vertices  by using the edge $e_1$ and sub hypercube edges $M_{11}$ within the distance $1 + (n-3) + 1 = n-1$.\\
The remaining vertex is $u^h$ which can be reached within distance $3 \leq {n-1}$ by using the path $u-v-v^h-u^h$.\\
Case $2:$ $M \subseteq \bigcup^2_{i=0} M_i$ and $M\cap M_0 \neq \phi $.\\
Because of all these conditions $M\cap M_{00} \neq \phi $ and $M\cap M_{11} \neq \phi $. Thus $M$ will contain at least one edge of $M_0$ say $e_3$ and $G$ will contain at least one edge of $M_0$ say $e_{4}$. Due to symmetry without loss let end vertices of the edge $e_{3}$ be $ u $ and $z = (0,0,0, \ldots 0,1,0 ) \in V(Q^{10}_{(n-2)})$. And end vertices of the edge $e_{4}$ be $v$ and $y = (1,0,0, \ldots 0,1,0 ) \in V(Q^{10}_{(n-2)})$. Let us denote by $e_5 \in E(G)$ the edge $uu^h$.\\  
$a: $ Due to presence of the edge $e_5$, every vertex in $Q^{01}_{(n-2)}$ is at the distance at most $1 + (n-2) = n-1$ from $u$.\\  
$b: $ Due to presence of the edge $e_1$, every vertex in $Q^{11}_{(n-2)}$ is at the distance at most $1 + (n-2) = n-1$ from $u$.\\
$c: $ Now we calculate distance of vertices (except vertex $z = (0,0, \ldots ,0,0,1,0 )$) of $ Q^{10}_{(n-2)} $ from the vertex $u$.The vertex $u$ can be reach to some of these vertices within distance $n-1$ by using sub hypercube edges of $M_{00} \cap G$ and remaining vertices except the vertex $z$ can be reached through the edge $e_1$ and then hypercube edges $M_{1} \cap E(G)$ whose end vertices (except vertex $w$) are in $V(Q^{11}_{(n-2)})$ within distance $1 + (n-3) + 1 = n-1$.\\ 
The remaining vertex is $z$ which can be reached within distance $3 \leq {n-1}$ by using the path $u-v-y-z$.\\
Thus, in both cases $d(u,u_1)= n-1$ for any vertex $u_1 \neq u \in V(G)$.(See FIGURE.5)\\
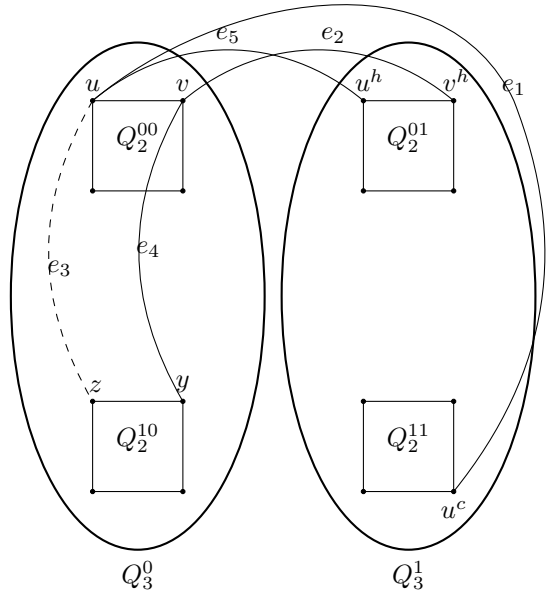
\begin{figure}[h]
	\begin{tikzpicture} [scale=.4]    
	
	\draw[fill=black](0,0) circle(.07); 
	\draw[fill=black](0,3) circle(.07); 
	\draw[fill=black](3,3) circle(.07); 
	\draw[fill=black](3,0) circle(.07); 
	
	\draw[fill=black](9,0) circle(.07); 
	\draw[fill=black](9,3) circle(.07); 
	\draw[fill=black](12,0) circle(.07); 
	\draw[fill=black](12,3) circle(.07); 
	
	\draw[fill=black](0,10) circle(.07); 
	\draw[fill=black](0,13) circle(.07); 
	\draw[fill=black](3,13) circle(.07); 
	\draw[fill=black](3,10) circle(.07); 
	
	\draw[fill=black](9,10) circle(.07); 
	\draw[fill=black](9,13) circle(.07); 
	\draw[fill=black](12,13) circle(.07); 
	\draw[fill=black](12,10) circle(.07); 
	
	%\node [below] at (0,0) {$0101(5)$};
	%\node [above] at (0,3) {$0100(4)$};
	%\node [above] at (3,3) {$0110(6)$};
	%\node [below] at (3,0) {$0111(7)$};
	
	%\node [below] at (9,0) {$1101(13)$};
	%\node [above] at (9,3) {$1100(12)$};
	%\node [above] at (12,3) {$1110(14)$};
	%\node [below] at (12,0) {$1111(15)$};
	
	%\node [below] at (0,10) {$0001(1)$};
	%\node [above] at (0,13) {$0000(0)$};
	%\node [above] at (3,13) {$0010(2)$};
	%\node [below] at (3,10) {$0011(3)$};
	
	%\node [below] at (9,10) {$1001(9)$};
	%\node [above] at (9,13) {$1000(8)$};
	%\node [above] at (12,13) {$1010(10)$};
	%\node [below] at (12,10) {$1011(11)$};
	
	\node [below] at (14,14) {$e_1$};
	\node [above] at (8,14.6) {$e_2$};
	\node [above] at (4.5,14.6) {$e_5$};
	\node [below] at (-1.1,8) {$e_3$};
	\node [above] at (1.85,7.5) {$e_4$};
	
	\node [above] at (0,13) {$u$};
	\node [above] at (3,13) {$v$};
	\node [above] at (9.2,13) {$u^h$};
	\node [above] at (12.1,13) {$v^h$};
	\node [above] at (0.1,3) {$z$};
	\node [above] at (3,3) {$y$};
	\node [below] at (12,0) {$u^c$};
	
	\node [above] at (1.5,11) {$Q^{00}_2$};
	\node [above] at (1.5,1) {$Q^{10}_2$};
	\node [above] at (10.5,11) {$Q^{01}_2$};
	\node [above] at (10.5,1) {$Q^{11}_2$};
	
	\node [below] at (1.5,-2) {$Q^0_3$};
	\node [below] at (10.5,-2) {$Q^1_3$};
	
	\draw (0,0)--(3,0)--(3,3)--(0,3)--(0,0);
	\draw (9,0)--(12,0)--(12,3)--(9,3)--(9,0);
	\draw (0,10)--(3,10)--(3,13)--(0,13)--(0,10);
	\draw (9,10)--(12,10)--(12,13)--(9,13)--(9,10);
	
	\draw (0,13) to[out=40, in=115] (14,13) to[bend left] (12,0);
	%\draw[dashed] (0,0) to[out=-40, in=-115] (14,0) to[bend right] (12,13);
	%\draw[dashed] (3,13)--(9,0);
	%\draw[dashed] (3,10)--(9,3);
	%\draw[dashed] (0,10)--(12,3);
	%\draw[dashed] (3,3)--(9,10);
	%\draw[dashed] (0,3)--(12,10);
	%\draw[dashed] (3,0)--(9,13);

	\draw[thick, rotate=90] (6.5,-1.5) ellipse (240pt and 120pt);
	\draw[thick, rotate=90] (6.5,-10.5) ellipse (240pt and 120pt);
	
	\draw (0,13) to[out=40, in=140] (9,13);
	\draw (3,13) to[out=40, in=140] (12,13);
	%\draw (0,10) to[out=-40, in=-140] (9,10);
	%\draw (3,10) to[out=-40, in=-140] (12,10);
	%\draw (0,3) to[out=40, in=140] (9,3);
	%\draw (3,3) to[out=40, in=140] (12,3);
	%\draw (0,0) to[out=-40, in=-140] (9,0);
	%\draw (3,0) to[out=-40, in=-140] (12,0);
	
	\draw[dashed] (0,13) to[out=-120, in=120] (0,3);
	\draw (3,13) to[out=-120, in=120] (3,3);
	
	\end{tikzpicture}
		\caption{$FQ_4 - M$ non-isomorphic to $Q_4$}
\end{figure}

Hence our assumption $G$ isomorphic to $Q_n$ is wrong.
  
 \end{proof}

 \noindent \textbf{Concluding remarks}\\
Thus we constructed perfect matchings of which some are removable and some are non-removable. The natural question arise here, whether we can classify all the perfect matchings of $FQ_n$ removal of which gives graph isomorphic or non-isomorphic to $Q_n$?~\\

 \noindent {\bf Acknowledgment:} The author gratefully
 acknowledges the Department of Science and Technology, New Delhi, India
 for the award of Women Scientist Scheme (SR/WOS-A/PM-79/2016) for research in Basic/Applied Sciences.

{\centerline{************}}

% ------------------------------------------------------------------------
%GATHER{xBib.bib}   % For Gather Purpose Only
%GATHER{Thesis.bbl} % For Gather Purpose Only
%\setlinespacing{1.44}
\bibliographystyle{amsplain}
%\bibliography{xbib}

\end{document}